\documentclass[10pt,reqno]{amsart}
\usepackage{amsmath,amsopn,amssymb,amsthm}
\usepackage[cp1251]{inputenc}
\usepackage[english]{babel}
\usepackage[pagebackref,breaklinks=true,colorlinks=true,linkcolor=blue,citecolor=blue,urlcolor=blue]{hyperref}
\usepackage{graphicx}%
\usepackage{color}

\voffset -1.3cm
\hoffset -1.1cm
\textwidth 14.5cm
\textheight 22cm
\newtheorem{theorem}{Theorem}[section]

\newtheorem{Definition}[theorem]{Definition}

\newtheorem{lemma}[theorem]{Lemma}

\newtheorem{Prop}[theorem]{Proposition}

\renewenvironment{proof}[1][Proof]{\noindent\textbf{#1.} }{\ \rule{0.5em}{0.5em}}

\newcommand{\m}{{\mathfrak m}}
\newcommand{\g}{{\mathfrak g}}
\newcommand{\h}{{\mathfrak h}}
\begin{document}

\title{Cyclic Finser metrics on homogeneous spaces}
\author{Ju Tan}
\address[Ju Tan]{School of Microelectronics and Data Science,
Anhui University of Technology, Maanshan, 243032, P.R. China}\email{tanju2007@163.com}
\author{Ming Xu$^*$}
\address[Ming Xu] {School of Mathematical Sciences,
Capital Normal University,
Beijing 100048,
P.R. China}
\email{mgmgmgxu@163.com}

\thanks{$^*$Ming Xu is the corresponding author.}

\begin{abstract}
In this paper, we generalize the notion of cyclic metric to homogeneous Finsler geometry. Firstly, we prove that a homogeneous Finsler space $(G/H, F)$ must be symmetric when it satisfies the naturally reductive and cyclic conditions simultaneously. Then we prove that a Finsler cyclic Lie group which is either flat or nilpotent must have an Abelian Lie algebra. Finally, we show how to induce a cyclic $(\alpha,\beta)$ metric from a cyclic Riemannian metric. Using this method, we construct a Randers cyclic Lie group.

Mathematics Subject Classification(2010): 53C22, 53C30, 53C60.
\vbox{}
\\
Keywords: $(\alpha, \beta)$ metric, cyclic Lie group, cyclic metric, homogeneous Finsler manifold, curvature
\end{abstract}

\maketitle

\section{Introduction}
We call a homogeneous Riemannian manifold $(G/H,\mathrm{g})$ {\it cyclic} with respect to a given reductive decomposition $\mathfrak{g}=\mathfrak{h}+\mathfrak{m}$, if the following property is satisfied
\begin{eqnarray}\label{thefirst}
 % \nonumber to remove numbering (before each equation)
 \langle [x, y]_{\m}, z \rangle + \langle [y, z]_{\m}, x \rangle + \langle [z, x]_{\m}, y\rangle =0, \quad\forall x, y, z \in \m.
 \end{eqnarray}
Here $\langle\cdot,\cdot\rangle$ is the $\mathrm{Ad}(H)$-invariant inner product on
$\mathfrak{m}=T_{eH}(G/H)$ determined by the $G$-invariant metric $\mathrm{g}$.
When $H$ is trivial (and then the reductive decomposition must be $\g=\h+\m=0+\g$), we call the cyclic $(G,\mathrm{g})$ a {\it cyclic Lie group}.
%we call homogeneous Riemannian manifold $(M, g)$ $\textit{cyclic}$ Riemannian manifold, where
%$\langle , \rangle$ denotes the $Ad(H)$-invariant inner product on $\m$ induced by $g$.
Moreover, if $G$
is unimodular, we specify the cyclic condition as the \textit{traceless cyclic} condition.

The motivation for studying cyclic metrics can be traced back to Tricerri-Vanhecke's classification for homogeneous structures \cite{TV1983}.
In their classification, there are three basic ones, i.e., $\mathcal{S}_1$, $\mathcal{S}_2$ and $\mathcal{S}_3$.
The cyclic (traceless cyclic) condition corresponds to the type $\mathcal{S}_1\oplus\mathcal{S}_2$ ($\mathcal{S}_2$ respectively).
Compared with the naturally reductive metrics, which have been more extensively studied and correspond to the type $\mathcal{S}_3$,
cyclic metrics have very different properties, worthy to be explored.
%However, natural reductiveness corresponds
%the traceless cyclic condition corresponds to the type,
%geometric types,which has three basic types, The condition of $(M, g)$ being traceless cyclic (resp., naturally reductive) means that $S$ is of type $\mathcal{S}_2$ (resp., $\mathcal{S}_3$).

Here we briefly survey some related progress.
Kowalski
and Tricerri studied the traceless cyclic condition in \cite{KT1987}.
%$\mathcal{S}_2$ when the dimension is less than five \cite{KT1987}. The case of dimension 5 was later considered in \cite{Bi1997}.
Gadea, Gon\'{a}lez-D\'{a}vila and Oubi\~{n}a concerned traceless cyclic Lie groups in \cite{GGO2015}, and characterized cyclic and traceless cyclic metrics in \cite{GGO2016}. In above literatures, relatively few examples have been discovered. Only those with low dimensions have been classified \cite{Bi1997,KT1987}.
%and they classified those simply connected ones when the dimension is less than five. In \cite{GGO2015}, They extend to the general case of the classification of connected and simply connected unimodular cyclic metric Lie
%groups for dimensions less than or equal to five (\cite{KT1987}\cite{Bi1997}).
Meanwhile, this project was generalized to homogeneous pseudo-Riemannian geometry in \cite{GO1992}. Calvaruso and L\'{o}pez classified Lorentzian cyclic Lie groups of dimension 3 and 4 \cite{Calvaruso}.
Tohidfa and Zaeim classified cyclic Lie groups of the signature type $(2,2)$ \cite{TZ2021}.

%In the pseudo-Riemannian case, G. Calvaruso  and M. L\'{o}pez  considered  Lie groups equipped with a left-invariant cyclic Lorentzian metric and obtained a full classification of three and four dimensional cyclic Lorentzian metrics (see \cite{Calvaruso}). A. Tohidfar and A. Zaeim studied the homogeneous structures of pseudo-Riemannian Lie groups of signature (2, 2), they classified the cyclic pseudo-Riemannian four-dimensional examples of Lie groups with neutral signature completely (see \cite{TZ2021}).

In this paper, we explore the generalization of the cyclic property in Finsler geometry,
replacing (\ref{thefirst}) with the following analog,
\begin{equation}\label{002}
      \langle [x, y]_{\m}, z \rangle_y + \langle [y, z]_{\m}, x \rangle_y + \langle [z, x]_{\m}, y\rangle_y =0, \quad\forall x,z \in \m,y\in\m\backslash\{0\}.
\end{equation}
Here the fundamental tensor $\langle\cdot,\cdot\rangle_y$ depends on $y$, so the Finsler cyclic condition (\ref{002}) is not multi-linear. Though this becomes an essential obstacle, we can still prove
some properties and find some examples for cyclic Finsler metrics.

Firstly, we prove

\begin{theorem}\label{main-thm-1}
If the homogeneous Finsler manifold is cyclic and naturally reductive
with respect to a given reductive decomposition, then that decomposition is a Cartan decomposition, i.e., that homogeneous Finsler manifold is a symmetric space.
\end{theorem}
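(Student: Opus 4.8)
The plan is to prove that the cyclic condition (\ref{002}) together with natural reductivity forces $[\m,\m]\subseteq\h$, which is precisely the statement that $\g=\h+\m$ is a Cartan decomposition. Write $\langle\cdot,\cdot\rangle_y$ for the fundamental tensor of $F$ at $y\in\m\setminus\{0\}$ and $\mathbf{C}_y$ for the associated Cartan tensor. The analytic input needed is minimal: the homogeneity identity $\mathbf{C}_y(y,\cdot,\cdot)=0$, and the first-variation formula $\frac{d}{dt}\big|_{t=0}\langle u,v\rangle_{y+tw}=2\mathbf{C}_y(u,v,w)$. I will also use that being naturally reductive with respect to $\g=\h+\m$ is equivalent to the statement that every curve $t\mapsto\exp(tX)\cdot eH$ with $X\in\m$ is a geodesic, which by the homogeneous geodesic equation reads
\begin{equation*}
\langle [X,Z]_{\m},X\rangle_X=0,\qquad\forall\,X\in\m\setminus\{0\},\ Z\in\m.
\end{equation*}

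The first step is to polarize this identity. Because of the non-multilinearity stressed in the introduction, an algebraic polarization is unavailable, so I differentiate instead: fixing $Z\in\m$ and a direction $W\in\m$, the function $t\mapsto\langle [X+tW,Z]_{\m},X+tW\rangle_{X+tW}$ vanishes identically for $t$ near $0$, and differentiating at $t=0$ (product rule together with the first-variation formula) yields
\begin{equation*}
\langle [W,Z]_{\m},X\rangle_X+\langle [X,Z]_{\m},W\rangle_X+2\mathbf{C}_X([X,Z]_{\m},X,W)=0.
\end{equation*}
The Cartan term vanishes because one of its three entries equals $X$, leaving the bilinear relation
\begin{equation*}
\langle [W,Z]_{\m},X\rangle_X+\langle [X,Z]_{\m},W\rangle_X=0,\qquad\forall\,X\in\m\setminus\{0\},\ W,Z\in\m. \tag{$\ast$}
\end{equation*}

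The second step is to feed $(\ast)$ into (\ref{002}). Fix $y\in\m\setminus\{0\}$ and $x,z\in\m$, and set $Q:=\langle [x,y]_{\m},z\rangle_y$, the first summand of (\ref{002}). Invoking $(\ast)$ with distinguished vector $y$ on the third summand gives $\langle [z,x]_{\m},y\rangle_y=-\langle [y,x]_{\m},z\rangle_y=Q$; invoking $(\ast)$ on the second summand and then using the skew-symmetry of the bracket together with the previous identity gives $\langle [y,z]_{\m},x\rangle_y=-\langle [x,z]_{\m},y\rangle_y=\langle [z,x]_{\m},y\rangle_y=Q$. Hence (\ref{002}) becomes $3Q=0$, so $\langle [x,y]_{\m},z\rangle_y=0$ for all $x,z\in\m$; since $\langle\cdot,\cdot\rangle_y$ is positive definite this gives $[x,y]_{\m}=0$ for all $x\in\m$ and all $y\in\m\setminus\{0\}$, and therefore $[\m,\m]\subseteq\h$, which is the assertion.

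The genuine difficulty is exactly the non-multilinearity: the Riemannian argument (``natural reductivity makes $\langle[\cdot,\cdot]_{\m},\cdot\rangle$ totally skew-symmetric, and the cyclic sum then kills it'') does not transfer verbatim because $\langle\cdot,\cdot\rangle_y$ varies with $y$. The differentiation in the first step is what rescues it, and it succeeds precisely because the only error term produced is a value of the Cartan tensor having $X$ in a slot, which vanishes by homogeneity. After $(\ast)$ is in hand the remainder is bookkeeping, but one must watch carefully which slot (first or second bracket entry, or the paired vector) the distinguished direction $y$ occupies each time $(\ast)$ is applied, using the antisymmetry of $[\cdot,\cdot]$ to line the terms up.
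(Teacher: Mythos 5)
Your proof is correct, and its skeleton is the paper's: your identity $(\ast)$ is precisely the paper's equation (\ref{004}) (equivalently (\ref{007})), and combining it with the cyclic sum to show that all three summands are equal and hence each vanishes is the same computation as the paper's linear combination $(\ref{007})+(\ref{005})-(\ref{004})$, after which nondegeneracy of $\langle\cdot,\cdot\rangle_y$ gives $[\m,\m]\subset\h$. The only genuine difference is how you reach $(\ast)$: the paper substitutes $u=y$, $v=z$ directly into Latifi's naturally reductive identity (\ref{003}) and kills the Cartan term via $C_y(\cdot,y,\cdot)=0$, whereas you first pass to the geodesic-orbit consequence $\langle[X,Z]_\m,X\rangle_X=0$ and then polarize it by differentiation (again using $C_y(\cdot,y,\cdot)=0$ to discard the error term); your route has the mild advantage of showing that the conclusion already follows from the g.o. property of the decomposition rather than from full natural reductivity. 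One caveat: you assert that natural reductivity is \emph{equivalent} to the condition $\langle[X,Z]_\m,X\rangle_X=0$. The implication you actually use (naturally reductive $\Rightarrow$ geodesic condition, obtained by setting $u=v=y$ in (\ref{003})) is correct, but the converse is the weaker ``geodesic orbit with respect to the decomposition'' property and is not obviously true for Finsler metrics, so you should either drop the equivalence claim or state only the implication you need.
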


The phenomenon in Theorem \ref{main-thm-1}
was pointed out in \cite{TV1983} when the metric is Riemannian.
It implies that, in homogeneous Finsler geometry,
the subclass of cyclic Finsler metrics have a trivial overlap with that of naturally reductive metrics, i.e.,
they generalize symmetric Finsler spaces in totally different directions.

Secondly, we generalize Proposition 5.5 and Proposition 3.4 in \cite{GGO2015} as follows.

\begin{theorem}\label{main-thm-4}
Non-Abelian nilpotent Lie groups do not admit cyclic left-invariant Finsler metrics.
\end{theorem}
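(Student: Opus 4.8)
The plan is to argue by contradiction, adapting the standard Riemannian argument to the Finsler setting while accounting for the one real nuisance in (\ref{002}), namely that the fundamental tensor $\langle\cdot,\cdot\rangle_y$ depends on the direction $y$ and hence cannot be summed over. Assume $G$ is a non-Abelian nilpotent Lie group carrying a left-invariant cyclic Finsler metric $F$. Since $H$ is trivial we have $\m=\g$, and (\ref{002}) becomes
\[
\langle[x,y],z\rangle_y+\langle[y,z],x\rangle_y+\langle[z,x],y\rangle_y=0,\qquad\forall\,x,z\in\g,\ y\in\g\setminus\{0\}.
\]

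First I would use nilpotency to manufacture a nonzero central element that is a single bracket. Consider the lower central series $C^{0}=\g$, $C^{k+1}=[\g,C^{k}]$. As $\g$ is non-Abelian and nilpotent there is $N\ge2$ with $C^{N}=0$ but $C^{N-1}\ne0$; then $[\g,C^{N-1}]=C^{N}=0$, so $C^{N-1}\subseteq Z(\g)$, the center of $\g$. Since $C^{N-1}=[\g,C^{N-2}]$ is the span of the brackets $[a,w]$ with $a\in\g$, $w\in C^{N-2}$, and this span is nonzero, I may fix $a\in\g$ and $w\in C^{N-2}$ (in particular $w\ne0$) with $z:=[a,w]\ne0$; note $z\in C^{N-1}\subseteq Z(\g)$.

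The conclusion then comes from evaluating the cyclic identity at this one carefully chosen triple. Taking $x=a$, $y=w$, and the central $z=[a,w]$ above, the relations $[w,z]=0$ and $[z,a]=0$ (valid because $z\in Z(\g)$) kill the last two terms, leaving $\langle[a,w],z\rangle_w=\langle z,z\rangle_w=0$. But $w\ne0$, so $\langle\cdot,\cdot\rangle_w$ is a positive-definite inner product, and $z\ne0$ forces $\langle z,z\rangle_w>0$, a contradiction. Hence $\g$ must be Abelian.

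I do not expect a serious obstacle; the only place the Finsler case truly departs from the Riemannian one is the choice of evaluation direction. In the Riemannian proof one picks an arbitrary nonzero $z\in[\g,\g]\cap Z(\g)$ and the multilinear cyclic identity immediately yields $\langle[\g,\g],z\rangle=0$, hence $\langle z,z\rangle=0$; in the Finsler case that step is unavailable because one cannot add up evaluations of $\langle\cdot,\cdot\rangle_y$ at varying $y$. The fix --- and it is really the heart of the proof --- is to produce a nonzero central element that is already a single bracket $z=[a,w]$ and then test (\ref{002}) precisely in the direction $y=w$, which is exactly what the lower central series of a non-Abelian nilpotent Lie algebra supplies.
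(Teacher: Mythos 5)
Your proof is correct, but it takes a genuinely different and more elementary route than the paper. The paper argues via curvature: part (2) of Lemma \ref{lemma-1} shows that for a cyclic left-invariant metric every central direction $y\in\mathfrak{c}(\g)\setminus\{0\}$ has $R_y\equiv 0$, while the proof of Theorem 5.1 in \cite{Hu2015-1} supplies, on any non-Abelian nilpotent Lie group with a left-invariant Finsler metric, a nonzero central $y$ with $\mathrm{Ric}(y)=\mathrm{tr}(R_y)>0$; the contradiction is immediate. You instead avoid curvature entirely: using the lower central series you produce a nonzero central element that is a single bracket $z=[a,w]$ with $w\in C^{N-2}$, and then evaluate the cyclic identity once at $x=a$, $y=w$, $z=[a,w]$, where centrality of $z$ kills two of the three terms and leaves $\langle z,z\rangle_w=0$, contradicting positive definiteness of the fundamental tensor. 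Your key observation --- that the loss of multilinearity is circumvented by choosing a central element which is itself a bracket $[a,w]$ and testing in the direction $y=w$ --- is exactly the right fix, and the lower central series does supply such an element ($C^{N-1}=[\g,C^{N-2}]\neq 0$ forces some individual bracket to be nonzero). What each approach buys: yours is self-contained, needs no spray vector field, connection operator, Riemann curvature formula, or Huang's Ricci estimate, and is arguably the shorter proof; the paper's version is unified with Lemma \ref{lemma-1}, which is reused for Theorem \ref{main-thm-2}, and exposes the geometric mechanism (cyclicity makes central directions flat, while nilpotency forces positive Ricci curvature there).
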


\begin{theorem}\label{main-thm-2}
If $F$ is a left invariant cyclic Finsler metric on a Lie group $G$ which has constant flag curvature $K\equiv0$, then $G$ has an abelian Lie algebra.
\end{theorem}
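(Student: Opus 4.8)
The plan is to use Finsler rigidity to trade the hypothesis $K\equiv0$ for the much more rigid structure of a Berwald (indeed locally Minkowskian) metric, where a Milnor-type description of the underlying Lie algebra is available, and then to exploit the cyclic identity \eqref{002} on a single rotation plane. First, since $F$ is left-invariant, $(G,F)$ is a homogeneous Finsler manifold and hence complete, so by Akbar-Zadeh's rigidity theorem---a complete Finsler manifold of constant flag curvature $K\equiv0$ is locally Minkowskian---the metric $F$ is Berwald, and its Chern connection $\nabla$ is a left-invariant, torsion-free and \emph{flat} linear connection on $G$, i.e.\ a bilinear map $\nabla\colon\g\times\g\to\g$, $(x,y)\mapsto\nabla_xy$, with $\nabla_xy-\nabla_yx=[x,y]$ and $[\nabla_x,\nabla_y]=\nabla_{[x,y]}$.

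Next I would manufacture a flat left-invariant Riemannian metric whose Levi-Civita connection is $\nabla$. Since $F$ is Berwald, $\nabla$-parallel transport is a linear isometry between the tangent Minkowski spaces; as $F$ is left-invariant, in the left-invariant trivialization the parallel transport along $t\mapsto\exp(tx)$ equals $e^{-t\nabla_x}$ and hence takes values in the compact group $H_0=\{A\in\mathrm{GL}(\g):F(Av)=F(v)\ \text{for all}\ v\in\g\}$ of linear isometries of the Minkowski norm $F|_{\g}$; so $\nabla_x\in\mathfrak{h}_0=\mathrm{Lie}(H_0)$ for every $x\in\g$. Averaging an arbitrary inner product on $\g$ over the compact connected subgroup of $H_0$ whose Lie algebra is generated by the operators $\nabla_x$ produces an inner product making every $\nabla_x$ skew-symmetric; its left-invariant extension $\bar{\mathrm{g}}$ is then $\nabla$-parallel (the skewness of the $\nabla_x$ says exactly $\nabla\bar{\mathrm{g}}=0$), so $\nabla$ is the Levi-Civita connection of $\bar{\mathrm{g}}$, and $\bar{\mathrm{g}}$ is flat because $\nabla$ is. (Alternatively, metrizability of the Chern connection of a left-invariant Berwald metric by a left-invariant Riemannian metric may be quoted from the literature.) Milnor's theorem on flat left-invariant Riemannian metrics then yields a $\bar{\mathrm{g}}$-orthogonal splitting $\g=\mathfrak{b}\oplus\mathfrak{u}$ where $\mathfrak{b}$ is an abelian subalgebra, $\mathfrak{u}$ is an abelian ideal, $[\mathfrak{b},\mathfrak{u}]\subseteq\mathfrak{u}$, and $\operatorname{ad}(b)|_{\mathfrak{u}}$ is skew-symmetric for every $b\in\mathfrak{b}$.

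Finally I would close with the cyclic condition. Suppose $\g$ is not abelian; then $[\mathfrak{b},\mathfrak{u}]\neq0$, so for some $b\in\mathfrak{b}$ the operator $\operatorname{ad}(b)|_{\mathfrak{u}}$ is a nonzero skew-symmetric endomorphism of the Euclidean space $(\mathfrak{u},\bar{\mathrm{g}}|_{\mathfrak{u}})$; it therefore acts as a nonzero rotation on some $2$-plane, so there are linearly independent $u_1,u_2\in\mathfrak{u}$ and $\lambda\neq0$ with $[b,u_1]=\lambda u_2$ and $[b,u_2]=-\lambda u_1$. Applying \eqref{002} (with $\m=\g$) to $x=b$, $y=u_2$, $z=u_1$ and using $[u_1,u_2]=0$ gives
\[
0=\langle[b,u_2],u_1\rangle_{u_2}+\langle[u_1,b],u_2\rangle_{u_2}
=-\lambda\bigl(\langle u_1,u_1\rangle_{u_2}+\langle u_2,u_2\rangle_{u_2}\bigr),
\]
which contradicts the positive definiteness of the fundamental tensor $\langle\cdot,\cdot\rangle_{u_2}$. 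Hence $\g$ has an abelian Lie algebra.

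I expect the first reduction to be the real obstacle: for a non-Berwald Finsler metric there is no direction-independent connection, and the homogeneous flag-curvature formula carries Cartan-tensor correction terms that are hard to control directly; it is Akbar-Zadeh's theorem (or a homogeneous counterpart) that delivers a genuine affine connection, and with it the Milnor structure. A subsidiary technical point is the passage from ``$\nabla$ is metrizable'' to ``$\nabla$ is the Levi-Civita connection of a \emph{left-invariant} Riemannian metric'', which is precisely where the compactness of the linear isotropy group of a Minkowski norm is used.
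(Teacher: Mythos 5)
Your proof is correct, and its second half takes a genuinely different route from the paper's. Both proofs share the first reduction: Akbar-Zadeh's rigidity (the needed bounds on the Cartan and Landsberg tensors come from homogeneity, not just completeness --- worth stating explicitly), followed by the production of a flat left-invariant Riemannian metric and an appeal to Milnor. (The paper gets that Riemannian metric more cheaply, by averaging the fundamental tensor over the indicatrix, which preserves left-invariance and flatness; your holonomy/compact-isotropy metrization of the Chern connection also works but is the fussier of the two.) The divergence is in what each proof does with Milnor's theorem. The paper extracts only that $[\g,\g]$ lies in a commutative ideal, and must then invoke its Lemma \ref{lemma-1} --- built on the spray vector field, the connection operator, and Huang's curvature formula (\ref{009}) --- to show via $R_y=-\mathrm{ad}(y)^2=0$ that every $y$ with $\langle \mathfrak{c}(\g)+[\g,\g],y\rangle_y=0$ is central, whence $\mathfrak{c}(\g)+[\g,\g]=\g$ and finally $\g=\mathfrak{c}(\g)$. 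You instead use the full strength of Milnor's decomposition $\g=\mathfrak{b}\oplus\mathfrak{u}$, including the skew-adjointness of $\mathrm{ad}(b)|_{\mathfrak{u}}$, locate a rotation $2$-plane $[b,u_1]=\lambda u_2$, $[b,u_2]=-\lambda u_1$, and apply the cyclic identity (\ref{002}) once to get $-\lambda(\langle u_1,u_1\rangle_{u_2}+\langle u_2,u_2\rangle_{u_2})=0$, forcing $\lambda=0$ by positive definiteness. Your closing argument is more elementary and entirely bypasses the curvature machinery; what the paper's approach buys is that its Lemma \ref{lemma-1} does double duty, being also the engine behind Theorem \ref{main-thm-4} on nilpotent groups.
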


Finally, we consider the construction of cyclic Finsler metrics.
The following theorem implies that
cyclic Randers and $(\alpha,\beta)$ metrics may be induced from those Riemannian ones.

\begin{theorem}\label{main-thm-3}
Let $G/H$ be a homogeneous manifold with a given reductive decomposition $\g=\h+\m$, $\alpha$
an $\mathrm{Ad}(H)$-invariant inner product on $\m$, and $\beta=\alpha(X,\cdot)$ in which $X\in\m\backslash\{0\}$ is an $\mathrm{Ad}(H)$-invariant vector. Suppose that $\alpha$ and $F=\alpha\phi(\tfrac{\beta}{\alpha})$ induce a cyclic Riemannian metric and a non-Riemannian homogeneous Douglas metric on $G/H$ respectively. Then $(G/H, F)$ is cyclic if and only if
\begin{eqnarray*}
% \nonumber to remove numbering (before each equation)
  \alpha([y, u]_{\m}, y)[ \alpha(y,y)\alpha(X,v)- \alpha(X,y)\alpha(v,y)]=\alpha([y, v]_{\m}, y)[ \alpha(y,y)\alpha(X,u)-  \alpha(X,y)\alpha(u,y)]
\end{eqnarray*}
is satisfied for any $u,v,y\in\m$.
\end{theorem}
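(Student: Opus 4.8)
The plan is to substitute the explicit fundamental tensor of an $(\alpha,\beta)$ metric into the Finsler cyclic condition (\ref{002}) and then use the two standing hypotheses, cyclicity of $\alpha$ and the Douglas property of $F$, to annihilate every term of the resulting cyclic sum except the one recorded in the statement.

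First I would recall that at $eH$ the Minkowski norm $F=\alpha\phi(\beta/\alpha)$ has fundamental tensor
$$\langle u,v\rangle_y=\rho\,\alpha(u,v)+\rho_0\,\beta(u)\beta(v)+\rho_1\bigl(\beta(u)\alpha(y,v)+\beta(v)\alpha(y,u)\bigr)+\rho_2\,\alpha(y,u)\alpha(y,v),$$
where $\beta(w)=\alpha(X,w)$, $\rho>0$, and $\rho_0,\rho_1,\rho_2$ depend on $y$ only through $\alpha(y,y)$, $\alpha(X,y)$ and the scalar $\alpha(X,X)$. The structural fact I will lean on is the normalization relation
$$\rho_1\,\beta(y)+\rho_2\,\alpha(y,y)=0,$$
a routine consequence of the fact that $\langle y,\cdot\rangle_y$ equals the (vertical) differential of $\tfrac12 F^2$ and that $F^2$ is a function of $\alpha(y,y)$ and $\beta(y)$ alone, so $\langle y,\cdot\rangle_y$ lies in the span of $\alpha(y,\cdot)$ and $\beta$ with $\alpha(y,\cdot)$-coefficient forced to be $\rho$. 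Equivalently,
$$\rho_1\,\alpha(X,\cdot)+\rho_2\,\alpha(y,\cdot)=\frac{\rho_1}{\alpha(y,y)}\,\theta_y(\cdot),\qquad \theta_y(w):=\alpha(y,y)\,\alpha(X,w)-\alpha(X,y)\,\alpha(w,y),$$
and $\theta_y$ is exactly the linear form occurring on both sides of the asserted identity; note $\theta_y(y)=0$. Finally, $F$ being non-Riemannian guarantees $\rho_1\not\equiv 0$, since $\rho_1\equiv 0$ would force $F^2$ to be a linear combination of $\alpha^2$ and $\beta^2$.

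Next I would expand the cyclic sum on the left of (\ref{002}) with this tensor. The $\rho$-part is $\rho$ times the left-hand side of the Riemannian cyclic identity (\ref{thefirst}) for $\alpha$, hence vanishes by hypothesis. To kill the remaining parts I invoke the Douglas hypothesis in the form: a non-Riemannian homogeneous Douglas $(\alpha,\beta)$ metric whose $\beta$ is dual to an invariant vector satisfies $\alpha(X,[u,v]_\m)=0$ for all $u,v\in\m$. This annihilates the whole $\rho_0$-part and exactly half of the $\rho_1$-part, leaving
$$\langle[x,y]_\m,z\rangle_y+\langle[y,z]_\m,x\rangle_y+\langle[z,x]_\m,y\rangle_y=f(x,y)\,\psi(z)+f(y,z)\,\psi(x)+f(z,x)\,\psi(y),$$
where $f(u,v):=\alpha([u,v]_\m,y)$ and $\psi(w):=\rho_1\,\alpha(X,w)+\rho_2\,\alpha(y,w)$. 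Now the normalization identity enters twice: $\psi(y)=0$ removes the last term, and $\psi=\tfrac{\rho_1}{\alpha(y,y)}\theta_y$, so, using $f(x,y)=-\alpha([y,x]_\m,y)$ and $f(y,z)=\alpha([y,z]_\m,y)$, the sum becomes
$$\frac{\rho_1}{\alpha(y,y)}\Bigl(\alpha([y,z]_\m,y)\,\theta_y(x)-\alpha([y,x]_\m,y)\,\theta_y(z)\Bigr).$$
Thus $(G/H,F)$ is cyclic if and only if this expression vanishes for all $x,z\in\m$ and all $y\ne 0$; since $\rho_1\not\equiv 0$, and after a routine continuity argument to cover the cone of directions where $\rho_1$ might happen to vanish, this is equivalent to $\alpha([y,u]_\m,y)\,\theta_y(v)=\alpha([y,v]_\m,y)\,\theta_y(u)$ for all $u,v,y$, which, unwinding $\theta_y$, is precisely the displayed identity. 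The reverse implication needs no division and is immediate from the same computation.

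The conceptual obstacle is that (\ref{002}) is not multilinear, because the coefficients $\rho_i$ vary with $y$. What makes the reduction succeed is that each coefficient either factors out of the sum (the $\rho$-term), is killed by the Douglas property (the $\rho_0$-term and one half of the $\rho_1$-term), or is pinned down by the single normalization identity $\rho_1\beta(y)+\rho_2\alpha(y,y)=0$ (the surviving $\rho_1$- and $\rho_2$-contributions). The step most in need of care is the external input ``non-Riemannian homogeneous Douglas $\Rightarrow\alpha(X,[u,v]_\m)=0$'', which rests on the classification of Douglas $(\alpha,\beta)$ metrics together with the homogeneity of the data; one must also confirm that the non-Riemannian hypothesis genuinely keeps $\rho_1$ from vanishing identically, so that the final statement is a true equivalence rather than a one-sided implication.
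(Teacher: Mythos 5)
Your proposal is correct and follows essentially the same route as the paper: its Lemma~4.2 likewise expands the fundamental tensor of $F=\alpha\phi(\beta/\alpha)$, uses the cyclicity of $\alpha$ together with the Douglas identity $\alpha(X,[\m,\m]_\m)=0$ to collapse the cyclic sum to $\alpha(y,y)^{-3/2}\,\Phi(r)\,\Psi(u,v,y)$ with $\Phi(r)=\phi\phi'-r\phi'^2-r\phi\phi''$ (so $\Phi(r)/\alpha(y,y)^{1/2}$ is exactly your $\rho_1$), and then shows $\Phi\not\equiv 0$ because $\Phi\equiv 0$ forces $\phi=\sqrt{c_1r^2+c_2}$, i.e.\ $F$ Riemannian. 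The one refinement to make is in the last step: since $\Phi$ could a priori vanish on a whole subinterval of $r$-values, the passage from ``$\Psi=0$ on the open set where $\Phi\neq 0$'' to ``$\Psi\equiv 0$'' should invoke the polynomiality of $\Psi$ in $y$ (as the paper does), not merely continuity.
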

As the application, we constructed left invariant non-Riemannian cyclic Randers metrics on some solvable Lie groups.

%This paper is organized as following. In Section 2, we summarize some basic knowledge in homogeneous Finsler geometry, which are necessary for our later discussion. In Section 3, we propose the definition of cyclic Finsler metric and prove Theorem \ref{main-thm-1} and Theorem \ref{main-thm-3}. In Section 4, under the Douglas case, we obtain the equivalent conditions for homogeneous $(\alpha, \beta)$ space to be cyclic when $\alpha$ is cyclic.  Moreover, we give some examples of cyclic Randers metric on the solvable Lie groups and product manifolds of cyclic 3-symmetric manifolds with one dimensional line.

\section{Preliminary in homogeneous Finsler geometry}

Let $G/H$ be a homogeneous manifold, which is endowed with a reductive decomposition $\g=\h+\m$.
Here the reductiveness means that the decomposition is $\mathrm{Ad}(H)$-invariant (it implies that, in the Lie algebra level, $[\mathfrak{h},\mathfrak{m}]\subset\mathfrak{m}$). Then the tangent space $T_{o}(G/H)$ at the origin $o=eH$ can be identified as $\m$, so that the isotropy action coincides with
the $\mathrm{Ad}(H)$-action on $\m$.

Any $G$-invariant Finsler metric $F$ on $G/H$ can be one-to-one determined by $F=F(o,\cdot)$, which is any arbitrary $\mathrm{Ad}(H)$-invariant Minkowski norm on $\mathfrak{m}$ \cite{De2012}. We call the pair $(G/H,F)$ a {\it homogeneous Finsler manifold}. For example, a homogeneous $(\alpha,\beta)$ metric can be determined by a Minkowski norm $F=\alpha\phi(\tfrac{\beta}{\alpha})$ on $\mathfrak{m}$, in which $\alpha$ is an $\mathrm{Ad}(H)$-invariant Euclidean norm, $\beta=\alpha(X,\cdot)$ is the $\alpha$-dual of some $\mathrm{Ad}(H)$-invariant $X\in\mathfrak{m}$, and $\phi(r)$ is some positive smooth function \cite{DH2013}.

A homogeneous metric $F$ on $G/H$ is called {\it naturally reductive} with respect to the given reductive decomposition $\g=\h+\m$, if
$$\langle[x,u]_\mathfrak{m},v\rangle_y+\langle[x,v]_\mathfrak{m},u\rangle_y+
2C_y([x,y]_\mathfrak{m},u,v)=0,$$
for any $y\in\m\backslash\{0\},x,u,v\in\mathfrak{m}$ \cite{La2007}. Here
$$\langle u,v\rangle_y=\tfrac12 \tfrac{\partial^2}{\partial s\partial t}|_{s=t=0}F^2(y+su+tv),\quad
C_y(u,v,w)=\tfrac12\tfrac{d}{dt}|_{t=0}\langle u,v\rangle_{y+tw}$$
are the {\it fundamental tensor} and the {\it Cartan tensor} of the Minkowski norm $F$ on $\mathfrak{m}$
respectively. See \cite{DH2010} for the equivalent definition and description of this natural reductiveness.

The {\it spray vector field} $\eta:\m\backslash\{0\}\rightarrow \m$ and the {\it connection operator} $N:(\m\backslash\{0\})\times\m\rightarrow\m$ for a homogeneous Finsler manifold $(G/H,F)$ with respect to a given reductive decomposition $\g=\h+\m$
are determined by the following equations respectively \cite{Hu2015-1},
\begin{eqnarray}
\langle \eta(y),u\rangle_y &=&\langle y,[u,y]_\m\rangle_y, \quad\forall y\in\m\backslash\{0\},\nonumber \\
2\langle N(y,v),u\rangle_y&=&\langle [u,v]_\m,y\rangle_y
+\langle[u,y]_\m,v\rangle_y+\langle[v,y]_m,u\rangle_y\nonumber\\
& &-2C_y(u,v,\eta(y)),\quad\forall y\in\m\backslash\{0\}, u,v\in\m.\label{008}
\end{eqnarray}
They are useful for presenting curvature formulae of homogeneous Finsler manifolds.
For example, we have the following homogeneous Riemann curvature formulae.
\cite{Hu2015-1,Hu2015-2}.

\begin{theorem} \label{thm-homogeneous-Riemann-curvature}
For a left invariant Finsler metric $F$ on the Lie group $G$,
the Riemann curvature $R_y:\g=T_eG\rightarrow T_eG=\g$ for any $y\in\g\backslash\{0\}$
can be presented as
\begin{equation}\label{009}
R_y(u)=D_\eta N(y,u)-N(y,N(y,u))+N(y,[y,u])-[y,N(y,u)].
\end{equation}
\end{theorem}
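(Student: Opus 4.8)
My plan is to derive the formula (\ref{009}) from the variational (Jacobi field) characterization of the Finslerian Riemann curvature, after reducing all the analytic data to the Lie algebra $\g$ by left translation. Recall that for the geodesic spray of a Finsler metric, $R_y$ is the algebraic operator appearing in the Jacobi equation $\mathcal{D}_{\dot c}\mathcal{D}_{\dot c}J+R_{\dot c}(J)=0$ governing variations of geodesics. The first step is therefore to transcribe both the geodesic equation and the covariant differentiation $\mathcal{D}_{\dot c}$ into purely algebraic operations on $\g$. For a left invariant $F$ on $G$, a geodesic $c(t)$ with $c(0)=e$ is encoded by its left translation to the identity, a curve $y(t)\in\g$, and the defining relation $\langle\eta(y),u\rangle_y=\langle y,[u,y]_\m\rangle_y$ of the spray vector field is precisely the statement that $\dot y(t)=\eta(y(t))$ along geodesics. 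Likewise, the defining relation (\ref{008}) of the connection operator $N$ is designed so that, for a field $W(t)$ along $c$ with left translate $u(t)\in\g$, the covariant derivative $\mathcal{D}_{\dot c}W$ left-translates to $\dot u(t)+N(y(t),u(t))$. I would take these two facts (established in \cite{Hu2015-1}) as the bridge between the Finslerian connection and the operators $\eta,N$.

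The second step is to introduce a geodesic variation $c_s(t)=g_s(t)$ in $G$ and to set $y_s=g_s^{-1}\partial_t g_s$ and $w_s=g_s^{-1}\partial_s g_s$. The flatness (Maurer--Cartan) identity for the logarithmic derivative gives the compatibility relation
\[
\partial_s y_s-\partial_t w_s=[y_s,w_s],
\]
so that at $s=0$, with $u:=w_0$ the left translate of the variation field $J$, one gets $\partial_s y|_{s=0}=\dot u+[y,u]$. This relation is the precise origin of the two Lie-bracket terms $N(y,[y,u])$ and $-[y,N(y,u)]$ in (\ref{009}): they are the corrections forced by the non-holonomicity of the left invariant frame, which a coordinate frame would not produce.

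The third step is to differentiate the geodesic equation $\dot y_s=\eta(y_s)$ in $s$ at $s=0$. Using $\partial_s\dot y=\partial_t\partial_s y=\ddot u+[\eta(y),u]+[y,\dot u]$ on one side and $\partial_s\eta(y_s)=D\eta|_y(\dot u+[y,u])$ on the other produces a second-order linear ODE for $u(t)$. Independently, expanding the Jacobi equation with $\mathcal{D}_{\dot c}W\leftrightarrow\dot u+N(y,u)$ gives
\[
\ddot u+\tfrac{d}{dt}N(y,u)+N(y,\dot u+N(y,u))+R_y(u)=0,
\]
where $\tfrac{d}{dt}N(y,u)=D_\eta N(y,u)+N(y,\dot u)$ because $\dot y=\eta(y)$ and $N$ is linear in its second argument. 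Matching the two expressions for $\ddot u$, the genuinely second-order and the first-order-in-$\dot u$ contributions must cancel by virtue of the defining identities of $\eta$ and $N$, leaving exactly the algebraic operator $R_y(u)=D_\eta N(y,u)-N(y,N(y,u))+N(y,[y,u])-[y,N(y,u)]$.

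The step I expect to be the main obstacle is this final matching: one must verify that every term containing $\dot u$ (the first-order part $N(y,\dot u)$ of the Jacobi operator, against $[y,\dot u]$ and the $D\eta|_y(\dot u)$ term coming from $(\star)$) cancels identically, so that $R_y$ is well defined as an operator algebraic in $u$ alone. Carrying this out forces one to differentiate the implicit definitions (\ref{008}) of $\eta$ and $N$ and to use the compatibility of the fundamental tensor $\langle\cdot,\cdot\rangle_y$ with the flow of $\eta$, where the Cartan tensor term $C_y(u,v,\eta(y))$ enters; the bookkeeping of the non-holonomic bracket contributions is the delicate point. An alternative, more computational route would bypass the variation: choose local coordinates adapted to the left invariant frame, express the spray coefficients $G^i(x,y)$ through $\eta$ and the structure constants, and substitute into the standard coordinate formula $R^i_{\ k}=2\partial_{x^k}G^i-y^j\partial_{x^j}\partial_{y^k}G^i+2G^j\partial_{y^j}\partial_{y^k}G^i-\partial_{y^j}G^i\,\partial_{y^k}G^j$, where the $\partial_x$-derivatives reproduce the bracket terms and the $\partial_y$-derivatives reproduce $D_\eta N$ and $N\circ N$. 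I would use the variational argument as the primary proof and keep the coordinate computation as a cross-check.
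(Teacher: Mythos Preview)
The paper does not prove this theorem at all: it is quoted from \cite{Hu2015-1,Hu2015-2} in the preliminaries section as a known tool, with no argument supplied. So there is no ``paper's own proof'' to compare against; any proof you write is necessarily outside the scope of what this paper does.

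That said, your outline is essentially the route taken in Huang's original papers: translate the spray and the Berwald-type connection to $\g$ via left trivialization (giving $\eta$ and $N$), encode a geodesic variation through the Maurer--Cartan relation $\partial_s y-\partial_t w=[y,w]$, and read off $R_y$ by matching the Jacobi equation against the $s$-derivative of the geodesic equation $\dot y=\eta(y)$. The alternative you mention --- expressing $G^i$ through $\eta$ and the structure constants and plugging into the coordinate formula for $R^i_{\ k}$ --- is in fact closer to how \cite{Hu2015-1} organizes the computation, so your ``cross-check'' is arguably the primary derivation there.

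The one place where your plan is genuinely incomplete is the cancellation of the $\dot u$-terms. Carrying out your matching gives, on the $\dot u$-part, the requirement $D\eta|_y(v)-[y,v]+2N(y,v)=0$ for all $v$; this is not automatic from the implicit definitions you quote but is an identity that must be established (it is the homogeneous translation of $N^i_j=\partial G^i/\partial y^j$ in the left-invariant frame, with the bracket correction coming from the non-holonomicity you already flagged). You should isolate and prove this identity first --- by differentiating the defining relation of $\eta$ in the $y$-direction and comparing with (\ref{008}) --- before the algebraic residue can legitimately be called $R_y$. Without it, the ``matching'' step is an assertion rather than a verification.
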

%
%Using Theorem \ref{thm-homogeneous-Riemann-curvature},
%homogeneous flag curvature formulae can be deduced \cite{XDHH2017}. Recall that the flag curvatures of $(G/H,F)$
%at $o$ are
%\begin{equation}\label{010}
%K(o,y,\mathbf{P})=\tfrac{\langle R_y(u),u\rangle_y}{
%\langle y,y\rangle_y\langle u,u\rangle_y-\langle y,u\rangle_y^2},
%\end{equation}
%where $y\in\m\backslash \{0\}=T_o(G/H)\backslash\{0\}$ and $\mathbf{P}=\mathrm{span}\{y,u\}\subset \m=T_o(G/H)$. By the homogeneity, we only need to discuss those flag curvatures at $o$.

More details on the curvatures in general Finsler geometry can be found in \cite{BCS2000}.

\section{Cyclic Finsler metric and its properties}
Now we are ready to present the precise definition for the Finsler cyclic condition.

\begin{Definition}
A homogeneous Finsler manifold $(G/H, F)$ is said to be \textit{cyclic} with respect to a given reductive decomposition $\g=\h+\m$,  if we have
\begin{equation*}
\langle[x, y]_{\m} , z\rangle_y+
\langle[y, z]_{\m} ,  x\rangle_y+
\langle[z, x]_{\m} , y\rangle_y=0,\quad \forall y\in\m\backslash\{0\}, x, z\in \m.
\end{equation*}
In particular, when $H=\{e\}$ (and then the reductive decomposition must be $\g=\h+\m=0+\g$), we call the cyclic $(G,F)$ a ({\it{Finsler}}) {\it cyclic Lie group}.
\end{Definition}

\begin{proof}[Proof of Theorem \ref{main-thm-1}]
Suppose the homogeneous Finsler metric $F$ is naturally reductive and cyclic with respect to the reductive decomposition $\g=\h+\m$. Choose any $y\in\m\backslash\{0\}$, $x,z\in\m$.
Then the natural reductiveness provides
\begin{equation}\label{003}
\langle[x,u]_\m,v\rangle_y+\langle[x,v]_\m,u\rangle_y+2C_y([x,y]_\m,u,v)=0,\quad\forall u,v\in\m.
\end{equation}
Input $u=y$ and $v=z$ into (\ref{003}), we get
\begin{equation}\label{004}
\langle[x,y]_\m,z\rangle_y+g_y([x,z]_\m,y)=0.
\end{equation}
Similar argument also provides
\begin{equation}\label{007}
\langle[z,y]_\m,x\rangle_y+g_y([z,x]_\m,y)=0.
\end{equation}
The cyclic condition provides
\begin{equation}\label{005}
\langle[x,y]_\m,z\rangle_y+\langle[y,z]_\m,x\rangle_y+\langle[z,x]_\m,y\rangle_y=0.
\end{equation}
The sum of (\ref{007}) and (\ref{005}) minus (\ref{004}) provides
$
3\langle [z,x]_\m,y\rangle_y=0
$.
So we get $\langle[\mathfrak{m},\mathfrak{m}]_\m,y\rangle_y=0$, $\forall y\in\m\backslash\{0\}$.
It can only happen when $[\mathfrak{m},\mathfrak{m}]\subset\mathfrak{h}$, i.e, $G/H$ is symmetric.
\end{proof}

To prove Theorem \ref{main-thm-4} and Theorem \ref{main-thm-2}, we need the following lemma.

\begin{lemma}\label{lemma-1}
Let $(G,F)$ be a Finsler cyclic Lie group. Then we have the following:
\begin{enumerate}
\item If $y\in\g\backslash\{0\}$ satisfies $\langle[\mathfrak{g},\g],y\rangle_y=0$, then $\eta(y)=0$ and $N(y,\cdot)=-\mathrm{ad}(y)$. Moreover, in this situation, $R_y=-\mathrm{ad}(y)^2$ and $\mathrm{ad}(y)$ is self-adjoint with respect to $\langle\cdot,\cdot\rangle_y$.
\item Any $y\in\mathfrak{c}(\mathfrak{g})\backslash\{0\}$ satisfies $\langle[\mathfrak{g},\g],y\rangle_y=0$ and $R_y\equiv0$.
\end{enumerate}
\end{lemma}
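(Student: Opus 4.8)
The plan is to exploit the cyclic condition to simplify the defining equations for $\eta(y)$ and $N(y,\cdot)$ exactly at those $y$ where $\langle[\g,\g],y\rangle_y=0$, and then feed the simplified connection operator into the curvature formula \eqref{009}. For part (1), start from the spray equation $\langle\eta(y),u\rangle_y=\langle y,[u,y]_\m\rangle_y=-\langle[y,u]_\m,y\rangle_y$ for all $u\in\g$; since $[y,u]_\m=[y,u]\in[\g,\g]$ (here $H$ is trivial so $\m=\g$ and the $\m$-projection is the identity), the hypothesis $\langle[\g,\g],y\rangle_y=0$ forces the right-hand side to vanish for every $u$, and nondegeneracy of $\langle\cdot,\cdot\rangle_y$ gives $\eta(y)=0$. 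With $\eta(y)=0$ the Cartan term $C_y(u,v,\eta(y))$ in \eqref{008} drops out, so $2\langle N(y,v),u\rangle_y=\langle[u,v],y\rangle_y+\langle[u,y],v\rangle_y+\langle[v,y],u\rangle_y$. I would now apply the cyclic identity with the triple $(u,v,y)$: it reads $\langle[u,v],y\rangle_y+\langle[v,y],u\rangle_y+\langle[y,u],v\rangle_y=0$, i.e. $\langle[u,v],y\rangle_y+\langle[v,y],u\rangle_y=\langle[u,y],v\rangle_y$. Substituting this into the previous display collapses the right-hand side to $2\langle[u,y],v\rangle_y=-2\langle[y,v],u\rangle_y$, so $\langle N(y,v),u\rangle_y=-\langle[y,v],u\rangle_y$ for all $u$, whence $N(y,v)=-[y,v]=-\mathrm{ad}(y)v$.

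Still within part (1), the self-adjointness of $\mathrm{ad}(y)$ with respect to $\langle\cdot,\cdot\rangle_y$ follows by a second use of the cyclic identity: writing the cyclic relation for $(u,v,y)$ and for $(v,u,y)$ and comparing, or more directly, $\langle\mathrm{ad}(y)u,v\rangle_y-\langle u,\mathrm{ad}(y)v\rangle_y=\langle[y,u],v\rangle_y-\langle[y,v],u\rangle_y$, and the cyclic identity rewrites each term as a combination of $\langle[u,v],y\rangle_y$-type expressions that cancel. For the curvature, plug $N(y,\cdot)=-\mathrm{ad}(y)$ and $\eta(y)=0$ into \eqref{009}: the term $D_\eta N(y,u)$ vanishes because $\eta(y)=0$, and the remaining three terms become $-\mathrm{ad}(y)^2 u+\mathrm{ad}(y)[y,u]-[y,-\mathrm{ad}(y)u]=-\mathrm{ad}(y)^2u+\mathrm{ad}(y)^2u+\mathrm{ad}(y)^2u=-\mathrm{ad}(y)^2u$, giving $R_y=-\mathrm{ad}(y)^2$. (One subtlety: the term $D_\eta N(y,u)$ is a directional derivative of $v\mapsto N(v,u)$ along $\eta(y)$ evaluated at $y$; since $\eta(y)=0$ the direction is zero and the term vanishes regardless of how $N$ varies — this is the one place I would state carefully rather than wave at.)

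For part (2), let $y\in\mathfrak{c}(\g)\setminus\{0\}$. Then $\mathrm{ad}(y)=0$, so for any $a,b\in\g$, $\langle[a,b],y\rangle_y$: I would like this to vanish, but $[a,b]$ need not be central, so I cannot conclude directly from $\mathrm{ad}(y)=0$. Instead I would use the cyclic identity on the triple $(a,b,y)$: $\langle[a,b],y\rangle_y+\langle[b,y],a\rangle_y+\langle[y,a],b\rangle_y=0$, and since $y$ is central the last two brackets vanish, leaving $\langle[a,b],y\rangle_y=0$ for all $a,b$; thus $\langle[\g,\g],y\rangle_y=0$ and part (1) applies, giving $R_y=-\mathrm{ad}(y)^2=0$. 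The main obstacle is bookkeeping rather than conceptual: one must be scrupulous that the cyclic condition is being applied to the correct triple (two arbitrary vectors and the distinguished $y$), track the antisymmetry of the brackets and the sign conventions consistently through \eqref{008} and \eqref{009}, and handle the $D_\eta N$ term correctly when $\eta(y)=0$. Everything else is linear algebra over a fixed inner product $\langle\cdot,\cdot\rangle_y$.
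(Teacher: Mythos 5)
Your argument is essentially the paper's own: the hypothesis $\langle[\g,\g],y\rangle_y=0$ kills $\eta(y)$ and the $\langle[u,v],y\rangle_y$ term in the defining equation for $N$, the cyclic condition (whose third term vanishes by that same hypothesis) yields $\langle[u,y],v\rangle_y=\langle[v,y],u\rangle_y$, which simultaneously gives $N(y,\cdot)=-\mathrm{ad}(y)$ and the self-adjointness of $\mathrm{ad}(y)$, and your part (2) is the paper's identical two-line application of the cyclic identity with $y$ central. One sign slip to fix in the curvature step: the middle term is $N(y,[y,u])=-\mathrm{ad}(y)[y,u]=-\mathrm{ad}(y)^{2}u$, not $+\mathrm{ad}(y)^{2}u$, so as written your three terms sum to $+\mathrm{ad}(y)^{2}u$, contradicting your (correct) final answer. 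With that sign corrected the sum reads $-\mathrm{ad}(y)^{2}u-\mathrm{ad}(y)^{2}u+\mathrm{ad}(y)^{2}u=-\mathrm{ad}(y)^{2}u$, and the whole proof matches the paper.
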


\begin{proof}
(1) Since $\langle[\g,\g], y\rangle_y=0$, we have $\eta(y)=0$.
By the cyclic condition, we have
\begin{equation}\label{011}
\langle[u,y],v\rangle_y-\langle[v,y],u\rangle_y
=\langle[u,y],v\rangle_y+\langle[v,u],y\rangle_y+\langle[y,v],u\rangle_y=0,
\end{equation}
so (\ref{008}) provides
\begin{eqnarray*}
2\langle N(y,v),u\rangle_y&=&
\langle[u,v],y\rangle_y+\langle[u,y],v\rangle_y+\langle[v,y],u\rangle_y-2C_y(u,v,\eta(y))\\
&=&\langle[u,y],v\rangle_y+\langle[v,y],u\rangle_y
=2\langle[v,y],u\rangle_y.
\end{eqnarray*}
It implies $N(y,v)=[v,y]$, $\forall v\in\g$, i.e., $N(y,\cdot)=-\mathrm{ad}(y)$.
Input $\eta(y)=0$ and $N(y,\cdot)=-\mathrm{ad}(y)$ into (\ref{009}) in Theorem \ref{thm-homogeneous-Riemann-curvature}, we get $R_y=-\mathrm{ad}(y)^2$ immediately.
Finally, using (\ref{011}), we see that $N(y,\cdot)$ is self-adjoint with respect to $\langle\cdot,\cdot\rangle_y$.

(2) Assume $y\in\mathfrak{c}(\mathfrak{g})\backslash\{0\}$. By the cyclic condition, we have $\forall x,z\in\g$,
$$\langle[z,x],y\rangle_y=\langle[x,y],z\rangle_y+\langle[y,z],x\rangle_y+\langle[z,x],y\rangle_y=0,$$
i.e., $\langle[\g,\g],y\rangle_y=0$. By (1) of Lemma \ref{lemma-1}, $R_y=-\mathrm{ad}(y)^2=0$ follows immediately.
\end{proof}

%\begin{Prop}\label{propositon-cyclic}
%Suppose Lie group $G$ admits cyclic Finsler metric,  and let $\g$ be Lie algebra of $G$,
%\begin{enumerate}
%  \item If $u$ is an element of center of $\g$. Then $u$ is orthogonal to derived algebra with respect to fundamental tensor $g_u$.
%  \item If $u$ is orthogonal to derived algebra with respect to fundamental tensor $g_u$, then $adu$ is self-adjoint operator with respect to $g_u$.
%\end{enumerate}
%\end{Prop}
%\begin{proof}
%For the first assertion, notice
%\begin{eqnarray*}
%% \nonumber to remove numbering (before each equation)
%&& g_u([u,v],w)+ g_u([v,w],u)+ g_u([w,u],v)=0, \forall v,w \in \g.
%\end{eqnarray*}
%then, the (1) follows.  For (2), by above equation, we have
%\begin{eqnarray*}
%% \nonumber to remove numbering (before each equation)
%&& g_u([u,v],w)+ g_u([w,u],v)=0.
%\end{eqnarray*}
%So $g_u([u,v],w)= g_u([u,w],v)=0$. Thus, the second assertion follows.
%\end{proof}
%
%\vspace{11pt}
%
%By Proposition 5.5 in \cite{GGO2015}, we know nonabelian nilpotent Lie groups do not admit cyclic left-invariant
%metrics. Now we generalized this result to Finslerian case.
%\begin{theorem}
%Non-Abelian nilpotent Lie groups do not admit cyclic left-invariant Finsler metrics.
%\end{theorem}
\begin{proof}[Proof of Theorem \ref{main-thm-4}]
Assume conversely that $G$ admits a left invariant cyclic Finsler metric and its Lie algebra $\mathfrak{g}$ is non-Abelian.
By the proof of Theorem 5.1 in \cite{Hu2015-1}, there exists a nonzero $y\in\mathfrak{c}(\mathfrak{g})$ with $\mathrm{Ric}(y)=\mathrm{tr}(R_y)>0$.
This is a contradiction with (2) in Lemma \ref{lemma-1}.
%In the proof of Theorem 5.1, the author shows that
%Assume conversely
%Since $\g$ is nonabelian  nilpotent, there exist a nonzero element $x\in Z(g)$, here $Z(g)$  denotes the center of Lie algebra $\g$.
%Suppose there exists cyclic left-invariant Finsler metrics on  Lie group $G$. Then we have
%\begin{eqnarray*}
%% \nonumber to remove numbering (before each equation)
%   &&  g_x([x, u], v)+  g_x([u, v], x)+  g_x([v, x], u)=0, \forall u, v\in \g,
%\end{eqnarray*}
%i.e.  $g_x([u, v], x)=0, \forall u, v\in \g$. We can choose $u\in \g^{n-2},  v\in \g$ such that
%$[u, v]=x$, here $\g^{n-2}$ belongs to the central descending series of the Lie algebra $\g$, so $g_x(x, x)=0$, then we get $x=0$, this is contradiction. So nonabelian nilpotent Lie groups do not admit cyclic left-invariant Finsler metrics.
\end{proof}

\begin{proof}[Proof of Theorem \ref{main-thm-2}]Assume $(G,F)$ is a Finsler cyclic Lie group, i.e., $F$ is a left invariant Finsler metric on $G$ which is cyclic with respect to the reductive decomposition $\g=\h+\m=0+\g$. We will prove $\g$ is Abelian by the following three claims.

{\bf Claim I}: $[\g,\g]$ is commutative.

The left invariance of $F$ implies that its Cartan tensor and Landsberg tensor are both bounded. So the argument proving Akbar-Zadeh's theorem \cite{Ak1988} can be applied here to prove that $F$ is locally Minkowskian. We average the fundamental tensor of $F$ on the indicatrix in each tangent space, with respect to the volume form induced by the Hessian metric, then we get a Riemannian metric $\overline{F}$ on $G$.
Because $F$ is locally Minkowskian, $\overline{F}$ is flat.
 This averaging process preserves Killing fields and isometries. So $\overline{F}$ is left $G$-invariant.
By Theorem 1.5 in \cite{Mi1976}, $\mathfrak{g}=\mathfrak{l}+\mathfrak{u}$, in which $\mathfrak{l}$ is a commutative subalgebra and $\mathfrak{u}$ is a commutative ideal. So we see
$[\g,\g]\subset\mathfrak{u}$ is commutative, which proves Claim I.

{\bf Claim II}: $\mathfrak{c}(\mathfrak{g})+[\g,\g]=\g$.

Assume conversely that $\mathfrak{c}(\mathfrak{g})+[\g,\g]\neq\g$. Then there exists a nonzero vector $y\in\g\backslash(\mathfrak{c}(\mathfrak{g})+[\g,\g])$, such that
$\langle\mathfrak{c}(\mathfrak{g})+[\g,\g],y\rangle_y=0$.
Because
$g_{y}([\mathfrak{g},\mathfrak{g}],y)=0$, we can apply the flatness condition for $F$ and (1) of Lemma \ref{lemma-1} to see that $\mathrm{ad}(y)$ is self adjoint with respect to $\langle\cdot,\cdot\rangle_{y}$ and $R_{y}(v)=-\mathrm{ad}(y)^2v=0$ for any $v\in\g$. Then we have
$$\langle \mathrm{ad}(y)v,\mathrm{ad}(y)v\rangle_{y}=
\langle\mathrm{ad}(y)^2v,v\rangle_{y}=
-\langle R_{y}(v),v\rangle_{y}=0,\quad\forall v\in\g,$$
i.e., $y\in\mathfrak{c}(\mathfrak{g})$. This is a contradiction, which proves Claim II.

{\bf Claim III}: $[\g,\g]\subset\mathfrak{c}(\g)$.

Choose any $u\in\g$ and $v\in[\g,\g]$.
By Claim II, we have the decomposition $u=u_1+u_2$ with $u_1\in\mathfrak{c}(\g)$ and $u_2\in[\g,\g]$. By Claim I, $[v,u_2]=0$, $[v,u]=[v,u_1]+[v,u_2]=0$. So we see $[\g,\g]\subset\mathfrak{c}(\g)$, which proves Claim III.

Summarizing Claim II and Claim III, we prove $\g=\mathfrak{c}(\g)$, which ends the proof.
\end{proof}

\section{Construction of cyclic $(\alpha,\beta)$ metrics}

To prove Theorem \ref{main-thm-3}, we need the following lemma.

\begin{lemma}\label{lemma-2}
Keeping all assumptions and notations in Theorem \ref{main-thm-3}, then
$F$ is cyclic if and only if $\Phi(r)\cdot \Psi(u,v,y)=0$ is satisfied for any $u,v\in\m$, $y\in\m\backslash\{0\}$, and $r=\tfrac{\alpha(X,y)}{\alpha(y,y)^{1/2}}$, in which
\begin{eqnarray*}\Phi(r)=\phi(r)\phi'(r)-r\phi'(r)^2-r\phi(r)\phi''(r)
\end{eqnarray*}
and
\begin{eqnarray*}
\Psi(u,v,y) &=&\alpha(y,[y,u]_\m)(\alpha(y,y)\alpha(X,v)-\alpha(y,v)\alpha(X,y))\\
                &   &+\alpha(y,[v,y]_\m)(\alpha(y,y)\alpha(X,u)-\alpha(y,u)\alpha(X,y)).
\end{eqnarray*}
\end{lemma}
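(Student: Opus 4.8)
The plan is to compute the left-hand side of the Finsler cyclic condition for the $(\alpha,\beta)$ metric $F=\alpha\phi(\tfrac\beta\alpha)$ directly, using the known formula for the fundamental tensor of an $(\alpha,\beta)$ metric in terms of $\alpha$, $\beta$ and the derivatives of $\phi$. Recall that for $F=\alpha\phi(s)$ with $s=\beta/\alpha$ one has
\begin{equation*}
\langle u,v\rangle_y = \rho\,\alpha(u,v) + \rho_0\,\alpha(X,u)\alpha(X,v) + \rho_1\big(\alpha(X,u)\alpha(y,v)+\alpha(X,v)\alpha(y,u)\big) + \rho_2\,\alpha(y,u)\alpha(y,v),
\end{equation*}
where $\rho,\rho_0,\rho_1,\rho_2$ are explicit functions of $r=\beta(y)/\alpha(y)=\alpha(X,y)/\alpha(y,y)^{1/2}$ built from $\phi,\phi',\phi''$ (with $\beta=\alpha(X,\cdot)$ so that $\beta(u)=\alpha(X,u)$, etc.). First I would substitute this expression into $\langle[x,y]_\m,z\rangle_y+\langle[y,z]_\m,x\rangle_y+\langle[z,x]_\m,y\rangle_y$ (writing $x=u$, $z=v$, keeping $y$ the base vector), and collect terms.

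The key simplification is that many pieces cancel because $\alpha$ is assumed cyclic: the $\rho\,\alpha(\cdot,\cdot)$ contribution is exactly $\rho$ times the Riemannian cyclic sum $\alpha([x,y]_\m,z)+\alpha([y,z]_\m,x)+\alpha([z,x]_\m,y)$, which vanishes. Next, the terms in which $y$ appears paired with a bracket $[\cdot,y]_\m$ and another $y$-slot should either vanish by antisymmetry of $\alpha(y,[y,\cdot]_\m)$-type expressions or regroup; and the hypothesis that $F$ is a Douglas (indeed non-Riemannian Douglas) metric is what lets me discard or rewrite the awkward pieces involving the Cartan tensor / $\phi''$ that are not already packaged into $\Psi$. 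Concretely, the Douglas condition on a homogeneous $(\alpha,\beta)$ metric forces strong restrictions (in the invariant setting $\beta$ is parallel-like with respect to $\alpha$ in the relevant sense), and this is precisely the input that makes the residual expression factor. After these cancellations I expect the cyclic sum to reduce to a single scalar multiple of $\Psi(u,v,y)$, the scalar being exactly the combination $\Phi(r)=\phi\phi'-r\phi'^2-r\phi\phi''$ evaluated at $r=\alpha(X,y)/\alpha(y,y)^{1/2}$ — one recognizes $\Phi$ as the standard denominator-type factor that appears in $\rho_0$ and in the strong convexity / Douglas computations for $(\alpha,\beta)$ metrics.

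I would organize the computation in three steps: (i) expand each of the three fundamental-tensor pairings and sum; (ii) use cyclicity of $\alpha$ to kill the $\rho$-term and use antisymmetry plus the Douglas hypothesis to eliminate the non-$\Psi$ remainder; (iii) identify the surviving coefficient as $\Phi(r)$ and read off that $F$ is cyclic iff $\Phi(r)\,\Psi(u,v,y)=0$ for all $u,v\in\m$ and $y\in\m\setminus\{0\}$. The main obstacle will be step (ii): correctly tracking the $\rho_1$- and $\rho_2$-contributions — where a bracket's $\m$-component meets a $y$-slot — and verifying that, modulo the cyclicity of $\alpha$ and the homogeneous Douglas condition, everything that is not proportional to $\Psi$ genuinely cancels rather than contributing an extra independent term. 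Once that bookkeeping is done, the equivalence follows; and since $\Phi(r)\not\equiv 0$ for a non-Riemannian $\phi$, this lemma is exactly what reduces Theorem \ref{main-thm-3} to the stated identity $\Psi(u,v,y)=0$.
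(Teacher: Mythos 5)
Your proposal follows essentially the same route as the paper: expand the fundamental tensor of $F=\alpha\phi(\beta/\alpha)$ explicitly, sum the three cyclic pairings, and let the structural hypotheses do the cancelling. The two inputs you single out --- cyclicity of $\alpha$ killing the $\phi(r)^2\,\alpha(\cdot,\cdot)$ contribution, and the non-Riemannian homogeneous Douglas hypothesis giving $\alpha(X,[\m,\m]_\m)=0$ so that every term in which $\beta$ is evaluated on a bracket drops out --- are exactly the ones the paper uses, and the remaining bookkeeping indeed collapses the cyclic sum to $\alpha(y,y)^{-3/2}\,\Phi(r)\,\Psi(u,v,y)$.
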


\begin{proof}
Direct calculation for the fundamental tensor shows that, for any $y\in\m\backslash0$, $u,v\in\m$,
\begin{eqnarray}
& &\langle u,v\rangle_y =\tfrac{1}{2}\tfrac{\partial^2}{\partial s\partial t}|_{s=t=0}F^2(y+su+tv)
\nonumber\\
&=&\tfrac{1}{2}\tfrac{\partial^2}{\partial s\partial t}|_{s=t=0}\left(\alpha(y+su+tv,y+su+tv)
\phi(\tfrac{\alpha(X,y+su+tv)}{\sqrt{\alpha(y+su+tv,y+su+tv)}})^2\right)\nonumber\\
&=&\phi(r)^2\alpha(u,v)-\phi(r)\phi'(r)\cdot\tfrac{\alpha(y,u)\alpha(y,v)\alpha(X,y)}{\alpha(y,y)^{3/2}}\nonumber\\
& &+\phi(r)\phi'(r)\cdot\tfrac{\alpha(y,v)\alpha(X,u)+\alpha(y,u)\alpha(X,v)-\alpha(u,v)\alpha(X,y)
}{\alpha(y,y)^{1/2}}\nonumber\\
& &+(\phi'(r)^2+\phi(r)\phi''(r))(\alpha(X,u)-\tfrac{\alpha(y,u)\alpha(X,y)}{\alpha(y,y)})(\alpha(X,v)-\tfrac{\alpha(y,v)\alpha(X,y)}{\alpha(y,y)}).\label{013}
\end{eqnarray}
Setting
\begin{eqnarray*}
& &Q_1=\alpha(X,[y,u]_\m)\alpha(y,v), \quad Q_2=\alpha(X,[y,u]_\m), \quad
Q_3=\alpha(X,[u,v]_\m)\alpha(y,y),\\
& & Q_4=\alpha(X,[v,y]_\m)\alpha(y,u),\quad
Q_5=\alpha(X,[v,y]_\m),
\end{eqnarray*}
then we can get the following from (\ref{013}),
\begin{eqnarray}
\langle[y,u]_\m,v\rangle_y&=& \alpha([y,u]_\m,v)\phi(r)^2-
\phi(r)\phi'(r)\cdot\tfrac{\alpha(y,[y,u]_\m)\alpha(y,v)\alpha(X,y)}{\alpha(y,y)^{3/2}}\nonumber\\
& &+\phi(r)\phi'(r)\cdot\tfrac{Q_1+\alpha(y,[y,u]_{\m})\alpha(X,v)-
\alpha(v,[y,u]_\m)\alpha(X,y)}{\alpha(y,y)^{1/2}}\nonumber\\
& &+(\phi(r)\phi''(r)+\phi'(r)^2)\cdot(Q_2-\tfrac{\alpha(y,[y,u]_\m)\alpha(X,y)}{\alpha(y,y)})\nonumber\\
& &\cdot(\alpha(X,v)-\tfrac{\alpha(y,v)\alpha(X,y)}{\alpha(y,y)}),\label{014}\\
\langle[u,v]_\m,y\rangle_y
&=&\alpha([u,v]_\m,y)\phi(r)^2-\phi(r)\phi'(r)\cdot
\tfrac{\alpha(y,[u,v]_\m)\alpha(X,y)}{\alpha(y,y)^{1/2}}\nonumber\\
& &+\phi(r)\phi'(r)\cdot\tfrac{Q_3}{\alpha(y,y)^{1/2}},\label{015}\\
\langle[v,y]_\m,u\rangle_y&=&\alpha([v,y]_\m,u)\phi(r)^2-
\phi(r)\phi'(r)\cdot\tfrac{\alpha(y,[v,y]_\m)\alpha(y,u)\alpha(X,y)}{\alpha(y,y)^{3/2}}\nonumber\\
& &+\phi(r)\phi'(r)\cdot\tfrac{Q_4+\alpha(y,[v,y]_\m)\alpha(X,u)-
\alpha(u,[v,y]_\m)\alpha(X,y)}{\alpha(y,y)^{1/2}}\nonumber\\
& &+(\phi(r)\phi''(r)+\phi'(r)^2)\cdot(Q_5-\tfrac{\alpha(y,[v,y]_\m)\alpha(X,y)}{\alpha(y,y)})\nonumber\\
& &\cdot(\alpha(X,u)-\tfrac{\alpha(y,u)\alpha(X,y)}{\alpha(y,y)})\label{016}.
\end{eqnarray}
The cyclic condition for $\alpha$ implies
$$\alpha([y,u]_\m,v)+\alpha([u,v]_\m,y)+\alpha([v,y]_\m,u)=0.$$
Since $F$ is a non-Riemannian Douglas metric, we have $\alpha([\m, \m]_{\m}, X)=0$ (see \cite{AD2008} or \cite{LD2015}), so $Q_1=Q_2=Q_3=Q_4=Q_5=0$. Using these observations, the sum of (\ref{014})-(\ref{016}) provides
\begin{eqnarray*}
& &\langle[y,u]_\m,v\rangle_y+\langle[u,v]_\m,y\rangle_y+\langle[v,y]_\m,u\rangle_y\\
&=&
\phi(r)\phi'(r)\cdot\tfrac{\alpha(y,[y,u]_\m)(\alpha(y,y)\alpha(X,v)-\alpha(y,v)\alpha(X,y))
+\alpha(y,[v,y]_\m)(\alpha(y,y)\alpha(X,u)-\alpha(y,u)\alpha(X,y))}{\alpha(y,y)^{3/2}}\\
& &-(\phi(r)\phi''(r)+\phi'(r)^2)\tfrac{\alpha(X,y)}{\alpha(y,y)^{1/2}}\cdot\\
& &\tfrac{\alpha(y,[y,u]_\m)(\alpha(y,y)\alpha(X,v)-\alpha(y,v)\alpha(X,y))
+\alpha(y,[v,y]_\m)(\alpha(y,y)\alpha(X,u)-\alpha(y,u)\alpha(X,y))}{\alpha(y,y)^{3/2}}\\
&=&\tfrac{1}{\alpha(y,y)^{3/2}}\cdot\Phi(r)\cdot\Psi(u,v,y).
%\left(\tfrac{1}{\alpha(y,y)^{3/2}}\phi(r)\phi'(r)-\tfrac{1}{\alpha(y,y)^2}(\phi(r)\phi''(r)+
%\phi'(r)^2)\alpha(X,y)\right)\nonumber\\
%& &\cdot[\alpha(y,[y,u]_\m)(\alpha(y,y)\alpha(X,v)-\alpha(y,v)\alpha(X,y))
%+\alpha(y,[v,y]_\m)(\alpha(y,y)\alpha(X,u)-\alpha(y,u)\alpha(X,y))]\\
\end{eqnarray*}
So $F$ is cyclic, i.e., $\langle[y,u]_\m,v\rangle_y+\langle[u,v]_\m,y\rangle_y+\langle[v,y]_\m,u\rangle_y\equiv0$,
if and only if $\Phi(r)\cdot\Psi(u,v,y)\equiv0$, which proves the lemma.
\end{proof}

Now we prove Theorem \ref{main-thm-3}, i.e., $F$ is cyclic if and only if $\Psi(u,v,y)\equiv0$.

\begin{proof}[Proof of Theorem \ref{main-thm-3}]Assume $\Psi(u,v,y)\equiv0$, then
Lemma \ref{lemma-2} indicates $F$ is cyclic. This proves one side of the theorem.

Assume $F$ is cyclic. By Lemma \ref{lemma-2}, we have
$\Phi(r)\cdot \Psi(u,v,y)\equiv0$. Notice that $\Phi(r)$ can not be constantly zero when $|r|\leq\alpha(X,X)^{1/2}$, otherwise
$\Phi(r)=(\phi(r)(\phi(r)-r\phi'(r)))'\equiv0$, i.e., $\phi(r)^2-r\phi\phi'(r)\equiv c$ for some constant $c$,
which can be easily solved and provides the solutions
$\phi(r)\equiv\sqrt{c_1r^2+c_2}$ for some constants $c_1$ and $c_2$. Then the metric $F=\alpha\phi(\tfrac{\beta}{\alpha})$ is Riemannian, which contradicts our assumption.
To summarize, there exists a nonempty open subset $\mathcal{U}\subset\m\backslash\{0\}$, such that
$\Psi(u,v,y)=0$ when $y\in\mathcal{U}$. Since $\Psi(u,v,y)$ is a polynomial, it must vanish identically.
This ends the proof for the other side of Theorem \ref{main-thm-3}.
\end{proof}

Now we use Theorem \ref{main-thm-3} to construct a left invariant cyclic Randers metric.

Let $G$ be a solvable Lie group. Its Lie algebra $\g$ has a chain of ideals,
$$\g=\g_0\supset \g_1\supset\cdots\supset \g_n=0,$$
in which $\dim\g_i=n-i$, $\forall 0\leq i\leq n$. We can find a basis $\{e_1,\cdots,e_n\}$ of $\g$,
such that $\g_{n-i}=\mathrm{span}\{e_1,\cdots,e_i\}$ for each $i$, and then an inner product $\alpha(\cdot,\cdot)$ on $\g$, such that $\langle e_i,e_j\rangle=\delta_{ij}$.
The following proposition in \cite{GGO2015} answers exactly when this $\alpha$ induces a left invariant cyclic metric on $G$.
%
%Recall the Lie algebra $\g$ of solvable Lie group $G$ have the following chain condition, that is, there exists a sequence
%\begin{equation*}
%g=g_0\supset g_1\supset \cdots \supset g_{n-1}\supset g_n=\{0\}.
%\end{equation*}
%where $g_r$ is an ideal in $g_{r-1}$ of codimension 1, $1\leq r \leq n$. Fix and inner product $\langle \cdot , \cdot \rangle$ on $\g$, we construct an orthonormal basis $\{e_1, \ldots , e_{n}\}$, called adapted to $(\g, \langle \cdot , \cdot \rangle)$, such that $\g_{n-i}=\mathbb{R}\{e_1, \ldots , e_{i}\}$, $i=1,\ldots, n$. It is easy to see that the structure constant $c_{ij}^k=0, k\geq max\{i, j\}$, where $c_{ij}^k=\langle [e_i, e_j], e_k\rangle$.
%
%We have the following proposition.
\begin{Prop}\label{prop1.8}
$(G,\alpha)$ is cyclic if and only if $\mathrm{ad}(e_i)$ is self adjoint on $\g_{n-i}$ for each $i$, or equivalently, the coefficients $c_{ij}^k$ in $[e_i,e_j]=c_{ij}^ke_k$ satisfy $c_{ik}^j=c_{jk}^i$, $\forall 1\leq i<j<k\leq n$.
\end{Prop}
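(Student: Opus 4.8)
The plan is to exploit that, since $H=\{e\}$ and $F=\alpha$ is Riemannian, the left-hand side of the cyclic identity is an honest trilinear form
$$T(x,y,z):=\langle[x,y],z\rangle+\langle[y,z],x\rangle+\langle[z,x],y\rangle,\qquad x,y,z\in\g,$$
with no $y$-dependence to contend with. It is manifestly invariant under cyclic permutations of its arguments, and the skew-symmetry of the bracket gives $T(y,x,z)=-T(x,y,z)$; hence $T$ transforms by the sign of every permutation, so it is an alternating form. Therefore $(G,\alpha)$ is cyclic, i.e.\ $T\equiv 0$, if and only if $T(e_a,e_b,e_c)=0$ for all $1\le a<b<c\le n$.

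Next I would bring in the solvable flag structure. Each $\g_j$ is an ideal and $\g_{n-m}=\mathrm{span}\{e_1,\dots,e_m\}$, so for $a\le b$ the element $e_a$ lies in the ideal $\g_{n-a}$, whence $[e_a,e_b]\in\g_{n-a}=\mathrm{span}\{e_1,\dots,e_a\}$; equivalently $c_{ab}^k=0$ whenever $k>\min(a,b)$. Applying this to the three summands of $T(e_a,e_b,e_c)$ with $a<b<c$ kills two of them: $c_{ab}^c=0$ and $c_{ca}^b=-c_{ac}^b=0$, leaving $T(e_a,e_b,e_c)=c_{bc}^a=\langle[e_b,e_c],e_a\rangle$. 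Thus the cyclic condition is equivalent to $c_{bc}^a=0$ for all $1\le a<b<c\le n$.

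It remains to recognize this in the two forms in the statement. For the self-adjointness form, note that since $\g_{n-i}$ is an ideal, $\mathrm{ad}(e_i)$ preserves $\g_{n-i}$, so the condition ``$\mathrm{ad}(e_i)$ self-adjoint on $\g_{n-i}$'' makes sense and, tested on the orthonormal basis, reads $c_{ij}^k=c_{ik}^j$ for all $j,k\le i$. Since for $k<j\le i$ the bracket $[e_i,e_k]$ lies in $\mathrm{span}\{e_1,\dots,e_k\}$, the term $c_{ik}^j$ vanishes automatically, so $c_{ij}^k=c_{ik}^j$ reduces to $c_{ij}^k=0$, which after relabeling is exactly $c_{bc}^a=0$; the cases $j=k$ and $j=i$ are trivial, and the two implications against $c_{bc}^a=0$ are immediate (taking $i=c$, $j=b$, $k=a$ for one direction). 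Likewise, for $i<j<k$ one has $c_{ik}^j=0$ for the same reason, so the displayed identity $c_{ik}^j=c_{jk}^i$ is just $c_{jk}^i=0$. Both reformulations are therefore equivalent to the cyclic condition.

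The content is entirely elementary and purely Lie-algebraic; the only delicate point is the index bookkeeping. Everything is driven by the single observation that $[e_a,e_b]$ has nonzero components only among $e_1,\dots,e_{\min(a,b)}$, and the main task is to apply this vanishing to the correct index triples when matching up the three equivalent formulations. I would also remark explicitly that it is the Riemannian — hence $y$-independent — setting that makes $T$ multilinear, so the non-multilinearity obstruction flagged in the introduction does not arise here.
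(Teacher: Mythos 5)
Your argument is correct, but note that the paper itself offers no proof of this Proposition at all: it is quoted verbatim from \cite{GGO2015} (``The following proposition in \cite{GGO2015} answers exactly...''), so there is nothing internal to compare against and your write-up is a genuinely self-contained addition. The two pillars of your proof both check out. First, in the Riemannian, $H=\{e\}$ setting the cyclic sum $T(x,y,z)$ is trilinear, cyclically invariant, and satisfies $T(y,x,z)=-T(x,y,z)$ by skew-symmetry of the bracket, so the set of permutations under which $T$ transforms by the sign character is a subgroup of $S_3$ containing a transposition and a $3$-cycle, hence all of $S_3$; thus $T$ is alternating and it suffices to test increasing basis triples. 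Second, since each $\g_{n-m}=\mathrm{span}\{e_1,\dots,e_m\}$ is an ideal, $[e_a,e_b]\in\mathrm{span}\{e_1,\dots,e_{\min(a,b)}\}$, which kills $c_{ab}^c$ and $c_{ca}^b$ for $a<b<c$ and leaves $T(e_a,e_b,e_c)=c_{bc}^a$; the same vanishing shows that both formulations in the statement (self-adjointness of $\mathrm{ad}(e_i)$ on $\g_{n-i}$, and $c_{ik}^j=c_{jk}^i$ for $i<j<k$) each degenerate to the single family of equations $c_{jk}^i=0$ for $i<j<k$. I verified the index bookkeeping in all three reductions and found no errors; as a sanity check, your criterion is also consistent with the example the paper builds immediately afterwards ($[e_n,e_i]=ae_i$, all other brackets zero). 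Your closing remark correctly identifies why this multilinear reduction is unavailable for the genuinely Finslerian cyclic condition treated elsewhere in the paper.
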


According to Proposition \ref{prop1.8}, $(G,\alpha)$ is cyclic when
 \begin{eqnarray*}\label{equ17}
 % \nonumber to remove numbering (before each equation)
 [e_i, e_j]=0,  1\leq i<j\leq n-1,\quad   [e_n, e_i]=a e_i,  1\leq i \leq n-1,
 \end{eqnarray*}
in which $a$ is a positive number.
We choose $X=ce_n$ for some $c\in(-1,1)\backslash\{0\}$ and $\beta=\langle X,\cdot\rangle$. Then $F=\alpha+\beta$ induces a left invariant Randers metric on $G$. It is a non-Riemannian Douglas metric because $\alpha(X,[\g,\g])=0$ \cite{AD2008,De2012}.
Calculation shows that for any generic $u,v,y$,
$$\tfrac{\alpha(y,y)\alpha(X,u)-\alpha(X,y)\alpha(u,y)}{\alpha([y,u],y)}
=-\tfrac{c}{a}=\tfrac{\alpha(y,y)\alpha(X,v)-\alpha(X,y)\alpha(v,y)}{\alpha([y,v],y)},$$
i.e., $\alpha$ satisfies the equation in Theorem \ref{main-thm-3}, so $F$ is cyclic. Using the same $\alpha$ and $\beta$, left invariant cyclic $(\alpha,\beta)$ metrics can be similarly constructed.

{\bf Acknowledgement}.
This paper is supported by National Natural Science Foundation of China (No. 12001007, No. 12131012, No. 11821101), Beijing Natural Science Foundation (No. 1222003), Natural Science Foundation of Anhui province (No. 1908085QA03).

\end{document}